\theoremstyle{plain}
\newtheorem{proposition}{Proposition}
\newtheorem{theorem}{Theorem }
\newtheorem{corollary}{Corollary}
\newcommand{\Ric}{\mbox{Ric}}
\theoremstyle{remark}
\newtheorem{remark}{Remark}
\newtheorem{example}{Example}
\DeclareMathOperator{\Ricst}{\accentset{\circ}{\Ric}}
\DeclareMathOperator{\sen}{sen}
\DeclareMathOperator{\Hess}{Hess}
\DeclareMathOperator{\Div}{div}
\DeclareMathOperator{\inte}{int}
\begin{document}

\title[On static manifolds with boundary]{On static manifolds satisfying an overdetermined Robin type condition on the boundary} 

\author{Tiarlos Cruz}
\address{
Universidade Federal de Alagoas\\
Instituto de Matem\'atica\\
Macei\'o, AL -  57072-970, Brazil}
\email{cicero.cruz@im.ufal.br}

\author{Ivaldo Nunes}
\address{
  Universidade Federal do Maranhão\\
  Departamento de Matemática\\
 São Luís, MA - 65080 - 805, Brazil}
\email{ivaldo.nunes@ufma.br}

\begin{abstract}
In this work, we consider static manifolds $M$ with nonempty boundary $\partial M$. In this case, we suppose that the potential $V$ also satisfies an overdetermined Robin type condition on $\partial M$. We prove a rigidity theorem for the Euclidean closed unit ball $B^3$ in $\mathbb{R}^3$. More precisely, we give a sharp upper bound for the area of the zero set $\Sigma=V^{-1}(0)$ of the potential $V$, when $\Sigma$ is connected and intersects $\partial M$. We also consider the case where $\Sigma=V^{-1}(0)$ does not intersect $\partial M$. 
\end{abstract}

\maketitle

\section{Introduction}

The following equation
\begin{equation}\label{staticequations1}
\Hess_g V - \Delta_gVg - V\Ric_g=0,   
\end{equation}
defined on a Riemanniann manifold $(M^n,g)$, is a very  interesting equation that appears naturally when we consider the problem of deforming the scalar curvature of $M$ (see \cite{FM, Cor}) and also in the context of general relativity (see \cite{FM, Cor, A, CLM}). In the former context, equation \eqref{staticequations1}  
defines the formal adjoint of the linearization of the scalar curvature.

A Riemannian manifold $(M^n,g)$ which admits a nontrivial smooth solution $V: M\to\mathbb{R}$ to \eqref{staticequations1} is called a \textit{static} manifold. That name comes from general relativity since such a manifolds are related to the concept of static spacetimes. The function $V$ is called a \textit{static} potential. We refer the reader to \cite{Wald, Cor} for details.

A very interesting question is that of classifying static manifolds, specially in dimension three. There are several classification results in the literature. See \cite{K,L,A,AM,BM,BCM,BM2} and references therein. 

The following result, proved by Shen \cite{Shen} (see also \cite{BGH}), gives a sharp upper bound for the area of the zero set of the static potential of a static manifold of positive scalar curvature in the case where the zero set is connected. 

\begin{theorem}[Shen \cite{Shen}]\label{Shenthm}
Let $(M^3,g)$ be an oriented compact static manifold with static potential $V: M\to \mathbb{R}$ . Suppose the scalar curvature of $M$ satisfies $R_g= 6$. If $\Sigma=V^{-1}(0)$ is connected, then $\Sigma$ is a two-sphere and 
\begin{equation}\label{Shen_areaestimate}
|\Sigma|\leq 4\pi.
\end{equation}
Moreover, equality holds if and only if $(M^3,g)$ is isometric to the standard three-sphere $(S^3,g_{can})$ of radius 1.
\end{theorem}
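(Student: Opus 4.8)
The plan is to read off the geometry of $\Sigma$ directly from the static equation and then to close the argument with Gauss--Bonnet. First I would trace \eqref{staticequations1}: in dimension $n=3$ with $R_g=6$ this gives $\Delta_g V=-3V$, so \eqref{staticequations1} can be rewritten as $\Hess_g V=V(\Ric_g-3g)$. Evaluating along $\Sigma=V^{-1}(0)$, where $V=0$, forces $\Hess_g V=0$ on $\Sigma$, and from this I would extract three facts. (i) $0$ is a regular value: if $V$ and $\nabla V$ vanished simultaneously at a point then $V\equiv0$ by unique continuation for \eqref{staticequations1}, contradicting nontriviality; hence $\Sigma$ is a smooth closed surface with unit normal $\nu=\nabla V/|\nabla V|$. (ii) The second fundamental form of $\Sigma$ equals $-|\nabla V|^{-1}\Hess_g V|_{T\Sigma}=0$, so $\Sigma$ is totally geodesic. (iii) Since $\nabla|\nabla V|^2=2\Hess_g V(\nabla V,\cdot)=0$ on $\Sigma$, the function $|\nabla V|$ is a positive constant $c$ on the connected surface $\Sigma$.

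Next I would apply the Gauss equation to the totally geodesic surface $\Sigma\subset M^3$. With vanishing second fundamental form it reduces to $2K_\Sigma=R_g-2\Ric_g(\nu,\nu)$, i.e.
\[
K_\Sigma=3-\Ric_g(\nu,\nu).
\]
Integrating and invoking Gauss--Bonnet, $\int_\Sigma K_\Sigma\,dA=2\pi\chi(\Sigma)$, I obtain $2\pi\chi(\Sigma)=3|\Sigma|-\int_\Sigma\Ric_g(\nu,\nu)\,dA$. Thus the whole statement reduces to the single integral estimate $\int_\Sigma\Ric_g(\nu,\nu)\,dA\le 2|\Sigma|$. Granting it, $2\pi\chi(\Sigma)\ge|\Sigma|>0$, so the connected oriented surface $\Sigma$ must satisfy $\chi(\Sigma)=2$, hence is a two-sphere; and then $|\Sigma|\le 2\pi\chi(\Sigma)=4\pi$, which is exactly \eqref{Shen_areaestimate}.

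The main obstacle is precisely the curvature estimate $\int_\Sigma\Ric_g(\nu,\nu)\,dA\le 2|\Sigma|$, and I expect this to be the heart of the proof. I would try to produce it from an integral identity on the domain $M^{+}=\{V>0\}$, whose boundary is exactly $\Sigma$. The key structural point is that $V|_\Sigma=0$ and $\Sigma$ is totally geodesic, so in a Reilly-type identity for $V$ on $M^{+}$ all boundary contributions vanish, leaving
\[
\int_{M^{+}}\big[(\Delta_g V)^2-|\Hess_g V|^2-\Ric_g(\nabla V,\nabla V)\big]\,dV=0.
\]
Substituting $\Delta_g V=-3V$ and $\Hess_g V=V(\Ric_g-3g)$ turns this into a weighted integral of curvature terms in which the sign of the traceless Ricci tensor $\Ricst_g$ should appear, while $\nabla_\nu\Hess_g V=c(\Ric_g-3g)$ along $\Sigma$ (from differentiating $\Hess_g V=V(\Ric_g-3g)$ and using $V=0$, $\nabla_\nu V=c$) records the boundary curvature. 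Reorganizing these so that the desired sign on $\int_\Sigma(\Ric_g(\nu,\nu)-2)$ emerges, with the positivity $R_g=6>0$ used decisively, is the delicate step; an alternative I would keep in reserve is a maximum-principle analysis of the auxiliary function $f=|\nabla V|^2+3V^2$, which satisfies $\nabla f=2V\,\Ric_g(\nabla V,\cdot)$ and is therefore critical and locally constant along $\Sigma$.

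Finally, for the rigidity statement I would run the equality case through the same identity. Equality $|\Sigma|=4\pi$ saturates $\int_\Sigma(K_\Sigma-1)\,dA=0$ on the sphere $\Sigma$, and tracing this back through the interior identity should force $\Ricst_g\equiv0$, i.e. $(M^3,g)$ is Einstein. In dimension three an Einstein metric has constant sectional curvature, here equal to $1$ because $R_g=6$; together with the existence of the static potential $V$ this identifies $(M^3,g)$ with the unit round sphere $(S^3,g_{can})$, completing the characterization of equality.
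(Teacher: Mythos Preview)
The paper does not actually prove this theorem; it is quoted from \cite{Shen} as motivation. What the paper does prove is the boundary analogue (Theorem~\ref{shentypetheorem}), and the method it uses there is exactly what is missing from your outline.

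Your reduction is correct: everything comes down to the inequality $\int_\Sigma \Ric_g(\nu,\nu)\,d\sigma\le 2|\Sigma|$, with equality forcing $\Ricst_g\equiv 0$. But the Reilly identity you propose does not deliver this. With $V|_\Sigma=0$ and $\Sigma$ totally geodesic, Reilly on $M^{+}=\{V>0\}$ indeed collapses to $\int_{M^{+}}\big[(\Delta_g V)^2-|\Hess_g V|^2-\Ric_g(\nabla V,\nabla V)\big]\,dv=0$; substituting $\Delta_g V=-3V$ and $\Hess_g V=V(\Ric_g-3g)$ you obtain
\[
\int_{M^{+}} V^2\bigl(6-|\Ricst_g|^2\bigr)\,dv=\int_{M^{+}}\Ric_g(\nabla V,\nabla V)\,dv,
\]
a $V^2$--weighted identity with no boundary term at all. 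There is no mechanism here to extract $\int_\Sigma(\Ric_g(\nu,\nu)-2)$, and the sentence ``reorganizing these so that the desired sign \ldots\ emerges'' is precisely the step that fails. The alternative you mention, the auxiliary function $f=|\nabla V|^2+3V^2$, also does not close: $\nabla f$ vanishes on $\Sigma$ for free since $\Hess_g V|_\Sigma=0$, so its criticality along $\Sigma$ carries no information about $\Ric_g(\nu,\nu)$.

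What works is the Pohozaev--Schoen identity \eqref{pohoschoentypeid} with $X=\nabla V$ on $\Omega=M^{+}$, exactly as in the paper's proof of Theorem~\ref{shentypetheorem}. Since $R_g$ is constant, the left side vanishes; since $\mathcal L_X g=2\Hess_g V=2V(\Ricst_g-g)$ and $\langle g,\Ricst_g\rangle=0$, the interior term becomes $\int_\Omega V|\Ricst_g|^2\,dv$; and the boundary term is $\int_\Sigma \Ricst_g(\nabla V,\xi)\,d\sigma=-\kappa\int_\Sigma(\Ric_g(\xi,\xi)-2)\,d\sigma$ with $\xi=-\nabla V/|\nabla V|$ and $\kappa=|\nabla V|_\Sigma>0$. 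This gives
\[
0\le \int_\Omega V|\Ricst_g|^2\,dv=\kappa\Bigl(2|\Sigma|-\int_\Sigma \Ric_g(\xi,\xi)\,d\sigma\Bigr),
\]
which is precisely your missing inequality, and equality forces $\Ricst_g\equiv 0$ on each component of $M\setminus\Sigma$, hence on $M$. From there your rigidity paragraph is fine.
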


Recently, motivated by \cite{FM,KW}, the first author and Vitório \cite{CV} considered the problem of prescribing the scalar curvature and the mean curvature of a Riemannian manifold with boundary. In the spirit of \cite{FM}, they considered the scalar curvature together with the mean curvature as a functional $g\mapsto (R_g,H_g)$ defined on the space of Riemannian metrics on a manifold $M$ with nonempty boundary. The formal adjoint of the linearization of this operator defines the following equations:

\begin{equation}\label{staticequations2}
\left\{
   \begin{array}{rcl}
\Hess_{g}V-(\Delta_{g} V){g}-V\Ric_{g} &= &0\quad\mbox{in}\quad M\\
\dfrac{\partial V}{\partial \nu}g-V\Pi_{g}& = &0\quad\mbox{on}\quad\partial M,
\end{array}
   \right.
\end{equation}
where  $\nu$ is the outward unit normal vector field to $\partial M$ and $\Pi_g$ is the second fundamental form of $\partial M$ with respect to $\nu$. Our convention here for $\Pi_g$ is such that the unit sphere have positive mean curvature with respect to the outward unit normal vector. 

We note that equations \eqref{staticequations2} are the analogues of \eqref{staticequations1} for manifolds with nonempty boundary and they also appear in the context of general relativity (see, for example, \cite{AdL}). Here, we will adopt the terminology introduced in \cite{AdL} and we will say that a Riemannian manifold $(M^n,g)$ with nonempty boundary is a \textit{static manifold with boundary} if there exists a non-trivial smooth function $V:M\to \mathbb{R}$, also called a \textit{static potential}, solution to \eqref{staticequations2}. 

A basic example of a compact static manifold with boundary is the Euclidean closed unit ball $B^n\subset\mathbb{R}^n$ where any static potential is given by $V(x)=\langle x,v\rangle$, $x\in B^n$, for some vector $v\in\mathbb{R}^n\setminus\{0\}$ (see, for example, \cite{HH}, Proposition 4.2). We note that in this case we have that the zero set of $V$ is equal to the flat closed unit disk $D^{n-1}=\{x\in\mathbb{R}^n:\langle x,v\rangle=0, |x|\leq 1\}$. We refer the reader to \cite{AdL} for basic examples of noncompact static manifolds with boundary.

In this work, we are interested in compact static manifolds with boundary with zero scalar curvature and positive mean curvature on the boundary. Our first result is the following whose item (i) is the analogue of Shen's theorem for this setting.

\begin{theorem}\label{shentypetheorem}
Let $(M^3,g)$ be an oriented compact static manifold with boundary such that $R_g=0$ and $H_g=2$. Let $V: M\to \mathbb{R}$ be the static potential of $M$. Suppose that $\Sigma=V^{-1}(0)$ is connected . Then
\begin{itemize}
\item[(i)] If $\Sigma\cap \partial M \neq \emptyset$, then $\Sigma$ is a free boundary totally geodesic two-disk and 
\begin{equation}\label{areaestimate2}
|\Sigma|\leq \pi.    
\end{equation}
Moreover, in this case, equality holds if and only if $(M^3,g)$ is isometric to the Euclidean unit ball $(B^3,\delta)$ and $V$ is given by $V(x)=\langle x,v\rangle$ for some vector $v\in\mathbb{R}^3\setminus\{0\}$.
\item[(ii)] If $\Sigma\cap \partial M=\emptyset$, then $\Sigma$ is a totally geodesic two-sphere and 
\begin{equation}\label{areaestimate1}
|\Sigma|<2\pi.
\end{equation}

\end{itemize}
\end{theorem}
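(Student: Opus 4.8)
The plan is to first reduce the overdetermined system \eqref{staticequations2} to pointwise data that I can feed into the Gauss--Bonnet theorem. Tracing the interior equation and using $n=3$, $R_g=0$ gives $(1-n)\Delta_g V - V R_g = 0$, hence $\Delta_g V = 0$ and $\Hess_g V = V\Ric_g$; in particular $V$ is harmonic. Tracing the boundary equation over the $2$-dimensional $\partial M$ yields $2\,\partial V/\partial\nu - V H_g = 0$, so with $H_g=2$ one gets the Robin condition $\partial V/\partial\nu = V$ on $\partial M$; substituting back into the full tensorial boundary equation $\tfrac{\partial V}{\partial\nu}g = V\Pi_g$ forces $\Pi_g = g$ wherever $V\neq0$, and by continuity $\Pi_g\equiv g$ on all of $\partial M$ (the boundary is totally umbilic with every principal curvature equal to $1$). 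After checking, via unique continuation, that $\nabla V\neq 0$ along $\Sigma$ (otherwise $V\equiv0$), the set $\Sigma$ is a smooth surface with unit normal $N=\nabla V/|\nabla V|$.

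Next I would extract the geometry of $\Sigma$. Since $V=0$ on $\Sigma$, the identity $\Hess_g V = V\Ric_g$ gives $\Hess_g V|_\Sigma=0$; as the shape operator of $\Sigma$ is $\Hess_g V/|\nabla V|$ up to sign, $\Sigma$ is \emph{totally geodesic}. Along $\Sigma\cap\partial M$ we have $V=0$, so the Robin condition gives $\partial V/\partial\nu = 0$, i.e. $\nabla V\perp\nu$; thus $N$ is tangent to $\partial M$ and $\nu$ is tangent to $\Sigma$, so $\Sigma$ meets $\partial M$ orthogonally and is a free boundary surface. A central computation is that $\varphi:=|\nabla V|$ is constant on $\Sigma$: its tangential derivatives vanish because $X(\varphi^2)=2\Hess_g V(\nabla V,X)=2V\Ric_g(\nabla V,X)=0$, and combining the Bochner formula $\tfrac12\Delta_g\varphi^2=|\Hess_g V|^2+\Ric_g(\nabla V,\nabla V)$ with the hypersurface decomposition of $\Delta_g\varphi$ (using $H_\Sigma=0$ and $N(\varphi)=0$ on $\Sigma$) yields $\Delta_\Sigma\varphi=0$; together with the Neumann data $\partial\varphi/\partial\nu=\tfrac{1}{\varphi}V\Ric_g(\nabla V,\nu)=0$ on $\partial\Sigma$ (resp. $\Sigma$ closed in case (ii)), harmonicity forces $\varphi\equiv c>0$.

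The finish is Gauss--Bonnet. The Gauss equation for the totally geodesic $\Sigma$ in the scalar-flat $M$ gives $2K_\Sigma = R_g - 2\Ric_g(N,N) = -2\Ric_g(N,N)$, so $K_\Sigma = -\Ric_g(N,N)$. Because $\Sigma\perp\partial M$ with $\Pi_g=g$, the outward conormal of $\partial\Sigma$ in $\Sigma$ is $\nu$ and the geodesic curvature computes to $k_g=\langle\nabla_T T,-\nu\rangle=\Pi_g(T,T)=1$. Hence in case (i),
\[
\int_\Sigma K_\Sigma\,dA + L(\partial\Sigma) = 2\pi\chi(\Sigma),
\]
which I would use to show $\Sigma$ is a disk, and in case (ii), $\int_\Sigma K_\Sigma\,dA = 2\pi\chi(\Sigma)$, identifying $\Sigma$ as a sphere. \textbf{The main obstacle is the area estimate itself}, since $K_\Sigma=-\Ric_g(N,N)$ is not pointwise constant and the two cases behave differently: in (ii) one needs a strict curvature lower bound $K_\Sigma>2$ (equivalently $\Ric_g(N,N)<-2$) to convert $\int_\Sigma K_\Sigma=4\pi$ into $|\Sigma|<2\pi$, whereas in (i) the equality model is the \emph{flat} disk ($K_\Sigma\equiv0$, $L(\partial\Sigma)=2\pi$), so the bound $|\Sigma|\le\pi$ cannot come from a positive pointwise bound and must instead be produced by an integral/isoperimetric argument compatible with the flat case. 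I expect the decisive input to be a maximum-principle ($P$-function) or Pohozaev--Rellich identity exploiting the overdetermined Robin condition, controlling $\Ric_g(N,N)$ through $\varphi\equiv c$.

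Finally, for the rigidity in (i) I would trace the equality case backward through every inequality used in the area estimate: equality should force $\Ric_g(N,N)\equiv0$ and, via the traceless-Ricci term $V^2|\Ric_g|^2$ in the Bochner identity, $\Ric_g\equiv0$, so $(M^3,g)$ is flat; with $\Pi_g\equiv g$ the boundary is a totally umbilic sphere of radius $1$, and the standard classification of flat compact $3$-manifolds with such boundary gives isometry with $(B^3,\delta)$, whereupon the interior equation forces $V(x)=\langle x,v\rangle$.
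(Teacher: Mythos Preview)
Your preliminary reductions are correct and match the paper's Proposition~1: $\Delta_g V=0$, $\Hess_g V=V\Ric_g$, $\partial V/\partial\nu=V$, $\Pi_g=g$, $\Sigma$ totally geodesic and free boundary when it meets $\partial M$, and $|\nabla V|\equiv\kappa>0$ along $\Sigma$ (for the last, your tangential computation already suffices---the Bochner/Neumann detour is unnecessary). Your rigidity sketch in~(i) is also essentially the paper's.

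The genuine gap is exactly the one you flag: you never produce the area inequality, and your proposed mechanism in case~(ii) (a pointwise bound $K_\Sigma>2$, i.e.\ $\Ric_g(N,N)<-2$) is not available and is not how the paper proceeds. The paper's key step is an \emph{integral} identity, Schoen's Pohozaev formula, applied with $X=\nabla V$ on a component $\Omega$ of $M\setminus\Sigma$ where $V>0$. Since $R_g=0$ the Ricci tensor is trace free and $\mathcal{L}_{\nabla V}g=2\Hess_g V=2V\Ric_g$, so the identity collapses to
\[
\int_\Omega V|\Ric_g|^2\,dv \;=\; \int_{\partial\Omega}\Ric_g(\nabla V,\eta)\,d\sigma,
\]
with $\eta$ the outward normal of $\partial\Omega$. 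On $\Sigma$ one has $\nabla V=-\kappa\,\xi$ and $\Ric_g(\xi,\xi)=-K_\Sigma$; on $S=\partial\Omega\setminus\Sigma$ one uses $\Ric_g(\nu,\cdot)|_{T(\partial M)}=0$ (Proposition~1(e)) and $V\Ric_g(\nu,\nu)=\Hess_g V(\nu,\nu)=-\Delta_S V-2\,\partial V/\partial\nu$. After integrating by parts (and, in case~(i), picking up the boundary term $-\int_{\partial\Sigma}\partial V/\partial\xi=\kappa\,L(\partial\Sigma)$ which exactly cancels the geodesic-curvature contribution in Gauss--Bonnet), both cases reduce to the single clean identity
\[
0\;\le\;\int_\Omega V|\Ric_g|^2\,dv \;=\; 2\kappa\bigl(\pi\chi(\Sigma)-|\Sigma|\bigr).
\]
This immediately gives $|\Sigma|\le\pi\chi(\Sigma)$, forcing $\chi(\Sigma)>0$ (hence $\Sigma$ is a disk in~(i), a sphere in~(ii)) and the stated bounds; equality forces $\Ric_g\equiv0$ on $M$, which is impossible in~(ii) (a flat sphere) and yields the ball in~(i). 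Note that this integral route handles both cases uniformly and explains why the same right-hand side $\pi\chi(\Sigma)$ appears---your instinct that~(i) and~(ii) require fundamentally different mechanisms was the misleading part.
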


We say a surface  $\Sigma\subset M$ with nonempty boundary $\partial \Sigma\neq \emptyset$ is \textit{free boundary} if $\partial\Sigma\subset \partial M$ and $\Sigma$ meets $\partial M$ orthogonally along $\partial\Sigma$.\\

The following example shows that case $(ii)$ in Theorem \ref{shentypetheorem} can occur.

\begin{example}\label{exampleSchw}
Given $m>0$, consider the three-dimensional Schwarzschild space $([2m,+\infty)\times \mathbb S^2,g_m)$ where
$$
 g_m=\left(1-\dfrac{2m}{r}\right)^{-1}dr^2+r^2\overline{g}.
$$
Here, $\overline{g}$ is the standard metric of the two-sphere $\mathbb S^2$ of radius $1$.

The function $V(r)=(1-2m/r)^{1/2}$ defines a static potential on $([2m,+\infty),g_m)$. Let $\Sigma_r=\{r\}\times S^2$ be a slice of this space. A simple computation shows that $\Sigma_r$ is totally umbilical with mean curvature given by $2V(r)/r$. 

Moreover, we have that $\partial V/\partial \nu_r=m/r^2$ where $\nu_r$ is the unit normal to $\Sigma_r$ parallel to $\partial_r$.  Thus, the equation
\begin{equation}\label{eqexample1}
\dfrac{\partial V}{\partial \nu_r}g_m - V\Pi_{g_m} = 0 \ \ \mbox{on $\Sigma_r$} 
\end{equation}
becomes
\begin{equation}\label{eqexample2}
m=V^2r.
\end{equation}
This implies that $r=3m$. Therefore $V$ solves \eqref{eqexample1} on $\Sigma_{3m}$. We note that the slice $\Sigma_{3m}$ is related to the notion of photon sphere in general relativity. In fact, $\Sigma_{3m}$ is a space-like slice of the photon sphere $\mathbb{R}\times \Sigma_{3m}$ in the static spherically symmetric Schwarzschild black hole spacetime of mass $m>0$.

Now, if we normalize to make the mean curvature of $\Sigma_{3m}$ equal to $2$ we get that 
$$
m=\dfrac{1}{3\sqrt{3}}.
$$

Note that in this case, we have that $\Sigma=V^{-1}(0)$ has area equal to $16\pi m^2=16\pi/27<2\pi$. In order to conclude the example we have just to reflect $[2m,3m]\times S^2$, $g_m$ and $V$ along $\Sigma$. Here, the potential is reflected with the opposite sign around $\Sigma$.

\end{example}

It is natural to ask about the uniqueness of the compact static manifold with boundary given in Example \ref{exampleSchw}. In our next result, we obtain some progress on this question. 

\begin{theorem}\label{photonsphererigidity}
Let $(M^3,g)$ be an orientable compact static manifold with boundary with static potential $V:M\to \mathbb{R}$. Suppose that $R_g=0$, $H_g>0$, $\Sigma=V^{-1}(0)$ is connected and $\Sigma\subset \inte M$. Let $\Omega$ be a connected component of $M\setminus \Sigma$. Then
\begin{itemize}
    \item [(i)] There is only one component of $\partial\Omega$ with positive mean curvature.
    \item[(ii)] If $S$ denotes the component of $\partial \Omega$ with positive mean curvature, then $S$ is a two-sphere and 
    \begin{equation}\label{psareaestimate}
    H_S^2|S|\leq \dfrac{16\pi}{3}.    
    \end{equation}
    Moreover, equality holds if and only if $(\Omega,g)$ is isometric to $([2m,3m]\times S^2,g_m)$, where $m=2/(3\sqrt{3}H_S)$, and $V$ is given by $V(r)=(1-2m/r)^{1/2}$.
\end{itemize}
\end{theorem}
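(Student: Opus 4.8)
The plan is first to record the pointwise consequences of the system. Taking the trace of the interior equation and using $R_g=0$, $n=3$, gives $\Delta_g V=0$, so $V$ is harmonic and the interior equation becomes $\Hess_g V=V\Ric_g$. Since $\Sigma=V^{-1}(0)\subset\inte M$, we have $V\neq 0$ on $\partial M$; the boundary equation then reads $\Pi_g=\frac{\partial V/\partial\nu}{V}\,g|_{\partial M}$, so every component of $\partial M$ is totally umbilic with $\Pi_g=\frac{H_g}{2}g|_{\partial M}$ and $\partial V/\partial\nu=\frac{H_g}{2}V$. Along $\Sigma$, $\Hess_g V=V\Ric_g=0$ forces $\Sigma$ to be totally geodesic and $|\nabla V|$ to be constant (from $\nabla|\nabla V|^2=2\Hess_g V(\nabla V,\cdot)=0$ on $\Sigma$). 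Fixing the component $\Omega$, since $V\neq0$ off $\Sigma$ and $\Omega$ is connected, $V$ has a fixed sign on $\Omega$, say $V>0$.

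For (i), note that on $\partial\Omega$ the face $\Sigma$ has $H=0$ while each component of $\partial M$ meeting $\overline\Omega$ has mean curvature $H_g>0$ with respect to the outward normal of $\Omega$ (which agrees with $\nu$). Thus (i) is equivalent to: $\overline\Omega$ meets exactly one component of $\partial M$. That it meets at least one follows from the maximum principle, since a harmonic $V>0$ with $\overline\Omega\cap\partial M=\emptyset$ would attain its maximum on $\Sigma$, where $V=0$. To see that it meets at most one, I would study the harmonic function $V$ on $\Omega$: it equals $0$ on $\Sigma$ (its minimum, with $\nabla V$ pointing into $\Omega$) and satisfies $\partial V/\partial\nu=\frac{H_g}{2}V>0$ on $\partial M$ (so $V$ increases outward there). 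Sweeping $\Omega$ by the level sets $\{V=t\}$, which emanate from the connected $\Sigma$, and controlling the topology change at the (necessarily saddle) interior critical points, one shows the foliation terminates on a single component of $\partial M$. I expect this uniqueness to be the most delicate point of (i).

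The core of (ii) is a Gauss--Bonnet identity on $S$. Using the Gauss equation for the umbilic surface $S$ together with $\sec_g(TS)=\frac{R_g}{2}-\Ric_g(\nu,\nu)=-\Ric_g(\nu,\nu)$,
\begin{equation*}
K_S=-\Ric_g(\nu,\nu)+\frac{H_g^2}{4}.
\end{equation*}
From $\Hess_g V=V\Ric_g$ and the splitting $\Delta_g V=\Delta_S V+H_g\,(\partial V/\partial\nu)+\Hess_g V(\nu,\nu)=0$, with $\partial V/\partial\nu=\frac{H_g}{2}V$, I obtain $\Ric_g(\nu,\nu)=-\frac{\Delta_S V}{V}-\frac{H_g^2}{2}$, hence
\begin{equation*}
K_S=\frac{\Delta_S V}{V}+\frac{3H_g^2}{4}.
\end{equation*}
Integrating over the closed surface $S$ and integrating by parts ($\int_S \Delta_S V/V=\int_S|\nabla_S V|^2/V^2\ge0$) yields
\begin{equation*}
2\pi\chi(S)=\int_S\frac{|\nabla_S V|^2}{V^2}+\frac{3}{4}\int_S H_g^2\ \ge\ \frac{3}{4}\int_S H_g^2.
\end{equation*}
Since $H_g>0$, the right side is positive, so $\chi(S)>0$ and $S$ is a two-sphere; then $\frac34\int_S H_g^2\le 4\pi$, i.e. $\int_S H_g^2\le\frac{16\pi}{3}$, and Cauchy--Schwarz (with $H_S$ the averaged mean curvature of $S$) gives $H_S^2|S|\le\int_S H_g^2\le\frac{16\pi}{3}$.

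Finally, equality forces $\int_S|\nabla_S V|^2/V^2=0$, so $V$ is constant on $S$, and tightness of Cauchy--Schwarz forces $H_g$ to be constant on $S$; thus $S$ is a genuine photon sphere (umbilic, with $V$ and $H_g$ constant). The remaining task is to upgrade this boundary rigidity to an isometry of $(\Omega,g)$ with the Schwarzschild annulus $([2m,3m]\times S^2,g_m)$. I would show the equality propagates into $\Omega$: analyze the level-set foliation of $V$, prove each regular level set is totally umbilic with $V$, $H$ and $|\nabla V|$ constant along it, deduce that $g$ is a spherically symmetric warped product of the form $h(r)^{-1}dr^2+r^2\overline{g}$, and integrate the reduced static equation (an ODE) to identify $h(r)=1-2m/r$ and $V=(1-2m/r)^{1/2}$, the normalization $m=2/(3\sqrt3\,H_S)$ being fixed by the value of $H_S$ on the photon sphere $\{r=3m\}$. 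The main obstacle is precisely this propagation step — deriving the warped-product structure on all of $\Omega$ from the equality data on $S$ — for which I would combine the umbilicity of the level sets with a unique-continuation/monotonicity argument for the static system.
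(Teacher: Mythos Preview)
Your Gauss--Bonnet computation for the inequality in (ii) is exactly the paper's Proposition~2; one small simplification is that the Codazzi equation combined with the static system already forces $H_g$ to be constant on each component of $\partial M$, so $H_S$ is a genuine constant and your Cauchy--Schwarz step is superfluous (and in the equality case you need not argue separately that $H_g$ is constant).

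The two substantive gaps are in (i) and in the equality case of (ii), and in both places the paper proceeds very differently from your outline. For (i), your level-set sweep is not an argument: nothing you have written prevents the regular levels $\{V=t\}$ from splitting as $t$ increases, and controlling the saddle topology of a harmonic function on a $3$-manifold with boundary is precisely the hard step you defer. The paper instead passes to the associated $4$-manifold $N^4$ obtained from $S^1\times(\Omega\setminus\Sigma)$ by filling in along $\Sigma$ with a disk factor, endowed with $h=V^2d\theta^2+g$; since $R_g=0$ this $h$ is Ricci-flat and (after normalizing $|\nabla V|=1$ on $\Sigma$) smooth, and each $S^1\times S_i\subset\partial N$ is umbilic with positive mean curvature $\tfrac12 H_{S_i}$. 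Fraser--Li's theorem (nonnegative Ricci with mean-convex boundary $\Rightarrow$ connected boundary) then gives connectedness of $S$. For the rigidity in (ii), your ``propagate to a warped product and integrate the ODE'' plan supplies no mechanism for the propagation step, which is exactly where the difficulty lies; the paper avoids it by gluing $(\Omega,g)$ along $S$ to the exterior Schwarzschild end $[3m,\infty)\times S^2$ (the equality data make the metric and potential match $C^{1,1}$ across $S$), reflecting through $\Sigma$, applying the Bunting--Masood-ul-Alam conformal change $\widehat g=\big((\widetilde V+1)/2\big)^4\widetilde g$ to obtain a complete scalar-flat asymptotically flat manifold of zero ADM mass, and invoking the rigidity of the (weak-regularity) Positive Mass Theorem. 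Thus both missing steps in your outline are resolved in the paper via external global theorems (Fraser--Li; the Positive Mass Theorem) rather than by the level-set analysis you sketch.
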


\begin{remark}
Let $(M^n,g)$ be a static manifold with boundary with static potential $V:M\to \mathbb{R}$. It is known (see, for example, \cite{Cor}) that we can construct a static spacetime $(\widehat{M}^{n+1},\widehat{g})$ from $(M,g)$ and $V$ by defining $\widehat{M}=\mathbb{R}\times \left(M\setminus V^{-1}(0)\right)$ and $\widehat{g}=-V^2dt^2+g$. Let $S$ be a connected component $\partial M\setminus V^{-1}(0)$. Since $V$ satisfies \eqref{staticequations4} on $\partial M$, we have that $\widehat{S}=\mathbb{R}\times S$ is a time-like totally umbilical hypersurface in the spacetime $(\widehat{M},\widehat{g})$. Thus, $\widehat{S}$ is a photon surface\footnote{A photon surface in a spacetime $(\widehat{M}^{n+1},\widehat{g})$ is an embedded hypersurface $\widehat{S}\subset\widehat{M}$ such that any null geodesic initially tangent to $\widehat{S}$ remains tangent to $\widehat{S}$ as long it exists. It is known that $\widehat{S}$ is a photon surface if and only if $\widehat{S}$ is totally umbilical. For details, we refer the reader to \cite{CVE}, Theorem II.1, and \cite{Pe}, Proposition 1.} in $(\widehat{M},\widehat{g})$. It is worth to note that, recently, Cederbaum \cite{C1} established the uniqueness of the Schwarzchild spacetime among all static, asymptotically flat solutions to the vacuum Einstein equations which admits a single photon sphere\footnote{A photon sphere in a static spacetime $(\widehat{M}^{n+1}=\mathbb{R}\times M^n, \widehat{g}=-V^2dt^2+g)$ is a photon surface $\widehat{S}$ such that the lapse function $V$ is constant on each connected component of $\widehat{S}$. We refer the reader to \cite{CVE, C1, CG1, CG3} for details about photon surfaces and photon spheres.}. This result was later generalized by Cederbaum and Galloway \cite{CG1} to allow multiple photon spheres. For more results involving photon spheres we refer the reader to \cite{CG1, CG2, GW, J, Shoom, TSI1, TSI2, Y, YL1, YL2, Yo}. 

The problem of classifying static manifolds with boundary is a very natural one since \eqref{staticequations2} are overdetermined equations that impose some restrictions on the geometry of the manifold and its boundary. The above relationship between static manifolds with boundary and the notions of static spacetimes and photon surfaces makes the problem even more interesting, mainly in dimension $n=3$.
\end{remark}

{\bf Acknowledgements}.
This work was carried out while the authors were visiting ICTP - International Centre for Theoretical Physics, as associates. T.C. and I.N. would like to acknowledge support from the ICTP through the Associates Programme (2018-2023 and 2019-2024, respectively). The authors are very grateful to the Institute and also to Claudio Arezzo for the hospitality. The authors also would like to thank Lucas Ambrozio for his interest in this work and for several enlightening comments. T. C. has been partially suported by  CNPq/Brazil grant 311803/2019-9 and T.C. and I.N. were partially supported by the Brazilian National Council for Scientific and Technological Development  (CNPq Grant 405468/2021-0).

\section{Preliminaries}

We start this section with the following basic remark: equations \eqref{staticequations2} are equivalent to the equations
\begin{equation}\label{staticequations3}
\Hess_gV=V\left(\Ric_g - \dfrac{R_g}{n-1}g\right)\ \ \mbox{and} \ \ \Delta_gV+\dfrac{R_g}{n-1}V=0 \ \ \mbox{on $M$}
\end{equation}
and 
\begin{equation}\label{staticequations4}
V\left(\Pi_g-\dfrac{H_g}{n-1}g\right)=0 \ \ \mbox{and} \ \ \dfrac{\partial V}{\partial \nu}=\dfrac{H_g}{n-1}V \ \ \mbox{on $\partial M$}.
\end{equation}
In order to see this equivalence, take the trace of equations \eqref{staticequations2}.

Next, we present some properties of static manifolds with boundary.

 \begin{proposition}\label{properties} 
 Let   $(M^n,g)$ be a static Riemannian manifold with boundary with static potential $V: M\to \mathbb R$. Suppose that $\Sigma=V^{-1}(0)$ is nonempty. Then:
\begin{itemize}
\item[(a)] $\Sigma$ is an embedded totally geodesic hypersurface in $M$. More precisely:
\begin{itemize}
\item[(a.1)] If $\Sigma\cap \partial M = \emptyset$, then $\Sigma$ is a totally geodesic hypersurface contained in $\inte M$;
\item[(a.2)] If $\Sigma\cap \partial M\neq \emptyset $, then each connected component $\Sigma_0$ of $\Sigma$ such that $\Sigma_0\cap \partial M\neq \emptyset$ is a free boundary totally geodesic hypersurface of $M$. In particular, $\partial \Sigma_0$ is a totally geodesic hypersurface of $\partial M$.
\end{itemize}
\item[(b)]$\kappa:=|\nabla_M V|$ is a positive constant
on each connected component of $\Sigma;$
\item[(c)] The scalar curvature $R_g$  is constant;
\item[(d)] $\partial M$ is totally umbilical and its mean curvature $H_g$ is constant on each connected component of $\partial M$. In particular, it follows from item (a.2) that if $\Sigma_0$ is a connected component of $\Sigma$ such that $\Sigma_0\cap \partial M\neq\emptyset $, the mean curvature of $\partial\Sigma_0$ in $\Sigma_0$ is locally constant.
\item[(e)] On $\partial M$, we have
$$
\Ric_g(\nu,X)=0,
$$
for any vector $X$ tangent to $\partial M$, where $\nu$ denotes the unit conormal vector field to $\partial M$.
\end{itemize}
\end{proposition}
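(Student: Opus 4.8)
The plan is to read everything off the two equivalent formulations \eqref{staticequations3} and \eqref{staticequations4}, exploiting the single most useful consequence: since $V=0$ on $\Sigma$, the Hessian equation gives $\Hess_g V\equiv 0$ along $\Sigma$, and $\nabla V$ is nowhere zero there. First I would establish this non-vanishing. If $V(p)=0$ and $\nabla V(p)=0$ at some $p$, then along any geodesic $\gamma$ from $p$ the function $f=V\circ\gamma$ solves the linear ODE $f''=\Hess_g V(\gamma',\gamma')=V\,(\Ric_g-\tfrac{R_g}{n-1}g)(\gamma',\gamma')$ with $f(0)=f'(0)=0$, hence $f\equiv 0$; therefore $\{V=0,\ \nabla V=0\}$ is open and closed, so it is empty because $M$ is connected and $V\not\equiv 0$. (At boundary points one uses that the Robin condition forces $\partial V/\partial\nu=\tfrac{H_g}{n-1}V=0$ on $\Sigma\cap\partial M$, together with unique continuation from the boundary for the elliptic equation $\Delta_g V+\tfrac{R_g}{n-1}V=0$.) Thus $0$ is a regular value of $V$ and $\Sigma$ is an embedded hypersurface.

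For (a) and (b), let $N=\nabla V/|\nabla V|$ be the unit normal along $\Sigma$. The second fundamental form is $A(X,Y)=|\nabla V|^{-1}\Hess_g V(X,Y)$ for $X,Y$ tangent to $\Sigma$, which vanishes because $\Hess_g V=0$ on $\Sigma$; this gives that $\Sigma$ is totally geodesic, proving (a.1). For (a.2), on $\Sigma\cap\partial M$ the vanishing of $\partial V/\partial\nu$ noted above means $\nabla V\perp\nu$, i.e.\ $N$ is tangent to $\partial M$ and equivalently $\nu$ is tangent to $\Sigma_0$; this is exactly the free boundary (orthogonal) condition, and the second fundamental form of $\partial\Sigma_0$ inside $\partial M$, computed with respect to the conormal $N$ which lies in $\partial M$, is again a multiple of $\Hess_g V$ restricted to $T\partial\Sigma_0$, hence zero, so $\partial\Sigma_0$ is totally geodesic in $\partial M$. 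For (b), for $X$ tangent to $\Sigma$ we have $X(|\nabla V|^2)=2\Hess_g V(X,\nabla V)=0$ on $\Sigma$, so $\kappa=|\nabla V|$ is constant on each component, and positive by the non-vanishing above.

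For (c) I would take the divergence of the vanishing symmetric $2$-tensor $\Hess_g V-(\Delta_g V)g-V\Ric_g$. Using the Bochner identity $\Div(\Hess_g V)=d(\Delta_g V)+\Ric_g(\nabla V,\cdot)$ and the contracted second Bianchi identity $\Div\Ric_g=\tfrac12\,dR_g$, all first-order terms cancel and one is left with the pointwise identity $\tfrac12\,V\,dR_g=0$. Since $\Sigma$ is a hypersurface, $\{V\neq 0\}$ is open and dense, so $dR_g=0$ there and hence everywhere by continuity; thus $R_g$ is constant.

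For (d) and (e), umbilicity of $\partial M$ is immediate: $V(\Pi_g-\tfrac{H_g}{n-1}g)=0$ gives $\Pi_g=\tfrac{H_g}{n-1}g$ on the dense set $\{V\neq 0\}\cap\partial M$, hence everywhere (no component of $\partial M$ lies in $\Sigma$, else $\nabla V$ would vanish there, contradicting (b)). The constancy of $H_g$ and property (e) I would obtain by computing $\Ric_g(\nu,X)$ two ways for $X$ tangent to $\partial M$. First, \eqref{staticequations3} gives $V\Ric_g(\nu,X)=\Hess_g V(\nu,X)$, and differentiating the Robin condition $\partial V/\partial\nu=\tfrac{H_g}{n-1}V$ tangentially and using umbilicity yields $\Hess_g V(\nu,X)=\tfrac{V}{n-1}X(H_g)$, so $V\bigl(\Ric_g(\nu,X)-\tfrac{1}{n-1}X(H_g)\bigr)=0$. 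Second, the traced Codazzi equation for the umbilical boundary gives $\Ric_g(\nu,X)=-\tfrac{n-2}{n-1}X(H_g)$. Substituting the second into the first gives $V\,X(H_g)=0$, so by density $X(H_g)=0$ and $H_g$ is constant on each component of $\partial M$, whence $\Ric_g(\nu,X)=0$, proving (e); the final claim of (d) follows since the mean curvature of $\partial\Sigma_0$ in $\Sigma_0$ equals $\tfrac{n-2}{n-1}H_g$ by umbilicity and the orthogonality from (a.2). \textbf{The two delicate points} are the unique-continuation step guaranteeing $\nabla V\neq 0$ up to the boundary, and the sign bookkeeping in the Codazzi identity: it is precisely because the boundary-condition expression $\tfrac{1}{n-1}X(H_g)$ and the Codazzi expression $-\tfrac{n-2}{n-1}X(H_g)$ carry opposite signs that $V\,X(H_g)=0$ holds in \emph{every} dimension, including the case $n=3$ where the two magnitudes coincide and a naive comparison would be vacuous.
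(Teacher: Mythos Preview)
Your proof is correct and follows essentially the same route as the paper: the geodesic ODE/unique-continuation argument for regularity of $V^{-1}(0)$ (including the observation that $\partial V/\partial\nu=0$ on $\Sigma\cap\partial M$), the vanishing-Hessian argument for (a) and (b), the standard divergence/Bianchi computation for (c) (the paper simply cites the literature here), and, for (d) and (e), the comparison of $\Hess_g V(\nu,X)$ computed from the Robin condition with the traced Codazzi equation on the umbilical boundary. The only cosmetic discrepancy is the Codazzi coefficient---the paper records $\Ric_g(\nu,X)=\tfrac{n}{n-1}X(H_g)$ with its convention while you obtain $-\tfrac{n-2}{n-1}X(H_g)$---but since neither equals $\tfrac{1}{n-1}$ the conclusion $V\,X(H_g)=0$ follows in either case.
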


\begin{proof}

It is well known that $0$ is a regular value of $V:M\to \mathbb{R}$ (see, for example, \cite{FM, Cor, A}). In order to prove (a), we need only to prove that $0$ is also a regular value of the restriction $V:\partial M\to \mathbb{R}$. 

In fact, we claim that if  there exists a point  $x_0\in \partial M$  such that $V(x_0)=0,$  then  $\nabla_{\partial M}V(x_0)\neq 0.$ By contradiction, suppose that $\nabla_{\partial M}V(x_0)=0$. Since $\partial V/\partial \nu=H_g/(n-1)V$ on $\partial M$ and $V(x_0)=0$, we conclude that $\partial V/\partial\nu (x_0)=0$. Thus $\nabla_M V(x_0)=0$.

Let $\gamma:[0,\varepsilon)\to M$ be a geodesic of $M$ such that $\gamma(0)=x_0$ and $\gamma(t)\in \inte M$ for $t>0$. Define, $h(t)\vcentcolon=V(\gamma(t))$. As in \cite{FM} (see also \cite{Cor}), by using \eqref{staticequations1} we have that $h$ solves 
$$
h^{\prime\prime}(t)=\left(\Ric_g(\gamma^\prime(t),\gamma^\prime(t)-\dfrac{R_g}{n-1}|\gamma^\prime(t)|^2\right)h(t).
$$
Since $h(0)=h^\prime(0)=0$, we have by uniqueness for solutions to second-order ODE's that $h\equiv 0$, that is, $V$ vanishes on $\gamma$. By varying the initial condition $\gamma^\prime(0)\in T_{x_0}M$, we can conclude that there exists an open set $U\subset \inte M$ such that $V$ vanishes on $U$. As $V$ solves $(n-1)\Delta_g V=R_g V$ on $M$, we have by analytic continuation that $V=0$ on $M$ which is a contradiction. Thus, $0$ is a regular value of $V:\partial M\to \mathbb{R}$.


Let $\Sigma_0$ be a connected component of $\Sigma$ such that $\Sigma_0\cap \partial M\neq\emptyset$. To see that $\Sigma_0$ meets $\partial M$ orthogonally along $\partial\Sigma_0$ we have to note that $\langle \nabla_MV(q),\nu(q)\rangle=0$ and that $\nabla_MV(q)$ is orthogonal to $\Sigma_0$ for all $q\in\partial \Sigma_0\cap \partial M$.

Moreover,  since $\Hess_{g} V(X,Y)=0$ and $\Hess_{g} V(X,\nabla_M V)=0$ for all tangent vectors $X, Y$  to $\Sigma,$ we conclude that $\Sigma$ is totally geodesic and $|\nabla_M V|^2$ is a constant along $\Sigma$ (also see \cite{Cor, A}). 

The proof that $R_g$ is constant is well known (see, for example, \cite{FM, Cor, A}).
It follows from \eqref{staticequations4} that $\Pi_{g}=\frac{H_{g}}{n-1}g$ on $\partial M$ since $V$ vanishes only on a set of zero measure of $\partial M$.

Consider $q\in \partial M$ and $X\in T_q(\partial M)$. At $q\in \partial M$ we have that 
\begin{eqnarray*}
\Hess_g V(X,\nu)&=& X\langle\nabla_M V,\nu\rangle-\langle\nabla_M V,\nabla_X\nu\rangle\\
&=& X\left(\frac{\partial V}{\partial \nu}\right) -\Pi(\nabla_{\partial M} V,X)\\
&=& X\left(\dfrac{H_g}{n-1}V\right) -\dfrac{H_g}{n-1}X(V)\\
&=& \dfrac{X(H_g)}{n-1} V.\\
\end{eqnarray*}

Now, by the contracted Codazzi equation
$$
\Div_{\partial M}\Pi_g-dH_g=\Ric_g(\nu,\cdot),
$$
we obtain that 
$$\Ric_g(\nu,X)=\frac{n}{n-1} X(H_g).$$

Since $V\Ric_g(X,\nu)=\Hess_g V(X,\nu)$ by \eqref{staticequations2} we have that $X(H_g)V=0$ on $\partial M$.  Therefore, $dH_g=0$ on $\partial M$ and this implies that $H_g$ is constant. In particular, on $\partial M$, we have that $\Ric_g(\nu,X)=0$ for any $X$ tangent to $\partial M$.

Finally, let $\Sigma_0$ be a connected component of $\Sigma$  such that $\Sigma_0\cap \partial M\neq \emptyset$. Since $\Sigma_0$ meets $\partial M$ orthogonally along $\partial\Sigma_0$, it is not difficult to see that the principal curvatures of $\partial\Sigma_0$ in $\Sigma_0$ at a point $q$ are all equal to $H_g(q)/(n-1)$.
\end{proof}

\begin{remark}
We note that in the case $R_g=0$, it follows from \eqref{staticequations3} and \eqref{staticequations4} that $V$ is an Steklov eigenfunction of $M$ with eigenvalue equal to $H_g/(n-1)$ (see also \cite{HH}, Proposition 3.8).
\end{remark}

Given a compact static manifold with boundary $(M^3,g)$, the following proposition gives an area estimate for the connected components of $\partial M$ where the potential $V$ is nonzero and does not change sign. 

\begin{proposition}\label{prop_areaestimate}
Let $(M^3,g)$ be a compact static manifold with boundary with static potential $V:M\to \mathbb{R}$. Let $S$ be a connected component of $\partial M$ and suppose that $V\neq 0$ everywhere on $S$. Then
\begin{equation}\label{areaineq}
\left(\dfrac{R_g}{2}+\dfrac{3}{4}H_S^2\right)|S|\leq 2\pi \chi(S),
\end{equation}
where $H_S$ denotes the mean curvature of $S$ with respect to the outward unit normal vector field $\nu$. Equality holds if and only if 
\begin{itemize}
    \item[(i)] $V$ is constant on $S$;
    \item[(ii)]  $S$ has Gauss curvature $K_S=\dfrac{R_g}{2}+\dfrac{3}{4}H_S^2$ and $\Ric_g(\nu,\nu)=-\dfrac{H_S^2}{2}$ on $S$.
\end{itemize}
\end{proposition}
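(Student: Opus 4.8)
The plan is to reduce inequality \eqref{areaineq} to the Gauss--Bonnet theorem applied to the closed surface $S$. Since $S$ is a connected component of $\partial M$, it is a closed surface, so $\int_S K_S\,dA=2\pi\chi(S)$, and it suffices to produce a pointwise identity for the Gauss curvature $K_S$ and to show that $\tfrac{R_g}{2}+\tfrac34H_S^2$ is no larger than its average. First I would rewrite $K_S$ via the Gauss equation together with the three-dimensional decomposition of curvature. By Proposition \ref{properties}(d) the component $S$ is totally umbilical with $\Pi_g=\tfrac{H_S}{2}g$, so the product of its principal curvatures is $\tfrac{H_S^2}{4}$; moreover, taking an adapted orthonormal frame $\{e_1,e_2,\nu\}$, in dimension three one has $K_M(e_1,e_2)=\tfrac{R_g}{2}-\Ric_g(\nu,\nu)$. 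Combining these yields
\begin{equation*}
K_S=\frac{R_g}{2}-\Ric_g(\nu,\nu)+\frac{H_S^2}{4}\qquad\text{on }S.
\end{equation*}

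The heart of the argument is to evaluate $\Ric_g(\nu,\nu)$ from the overdetermined system. I would decompose the ambient Laplacian along $S$ as $\Delta_g V=\Delta_S V+H_S\,\tfrac{\partial V}{\partial\nu}+\Hess_g V(\nu,\nu)$, and then insert the three facts coming from \eqref{staticequations3} and \eqref{staticequations4}: that $\Delta_g V=-\tfrac{R_g}{2}V$, that $\tfrac{\partial V}{\partial\nu}=\tfrac{H_S}{2}V$ (the Robin condition, using that $H_g$ is constant on $S$ by Proposition \ref{properties}(d)), and that $\Hess_g V(\nu,\nu)=V\big(\Ric_g(\nu,\nu)-\tfrac{R_g}{2}\big)$. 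Solving for $\Ric_g(\nu,\nu)$ and dividing by $V$---which is legitimate precisely because $V\neq0$ everywhere on $S$---gives
\begin{equation*}
\Ric_g(\nu,\nu)=-\frac{\Delta_S V}{V}-\frac{H_S^2}{2}\qquad\text{on }S.
\end{equation*}

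Substituting this into the curvature identity and integrating over $S$ by Gauss--Bonnet then produces
\begin{equation*}
2\pi\chi(S)=\Big(\frac{R_g}{2}+\frac34H_S^2\Big)|S|+\int_S\frac{\Delta_S V}{V}\,dA.
\end{equation*}
To finish I would use that $V$ does not vanish and hence does not change sign on the connected surface $S$, so $\log|V|$ is a smooth function on the closed surface $S$; integrating the identity $\Delta_S\log|V|=\tfrac{\Delta_S V}{V}-\tfrac{|\nabla_S V|^2}{V^2}$ and applying the divergence theorem (there is no boundary term) shows $\int_S\tfrac{\Delta_S V}{V}\,dA=\int_S\tfrac{|\nabla_S V|^2}{V^2}\,dA\ge0$. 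This immediately yields \eqref{areaineq}. Equality forces $\nabla_S V\equiv0$, i.e.\ $V$ is constant on $S$ (item (i)); feeding $\Delta_S V=0$ back into the two displayed identities recovers $\Ric_g(\nu,\nu)=-\tfrac{H_S^2}{2}$ and $K_S=\tfrac{R_g}{2}+\tfrac34H_S^2$ (item (ii)), and conversely these conditions return equality.

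The computations are all elementary once the ingredients of Proposition \ref{properties} are in hand. The only points demanding care are the sign conventions in the Laplacian decomposition and the Gauss equation, which must be matched to the paper's convention that the round sphere has positive mean curvature with respect to the outward normal, and the observation that the hypothesis $V\neq0$ on $S$ is exactly what legitimizes both the division by $V$ and the smoothness of $\log|V|$; this is where the assumption enters essentially.
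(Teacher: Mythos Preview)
Your proof is correct and follows essentially the same route as the paper: both derive the identity $\Delta_S V+(\Ric_g(\nu,\nu)+\tfrac{H_S^2}{2})V=0$ from the Laplacian decomposition along $S$ and the static equations, combine it with the (contracted) Gauss equation to get $K_S=\tfrac{R_g}{2}+\tfrac34 H_S^2+\tfrac{\Delta_S V}{V}$, integrate, and use Gauss--Bonnet together with $\int_S\tfrac{\Delta_S V}{V}=\int_S\tfrac{|\nabla_S V|^2}{V^2}\ge 0$. The only cosmetic difference is that you phrase the last step via $\Delta_S\log|V|$ whereas the paper simply calls it integration by parts; the equality analysis is identical.
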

\begin{proof}
On $S$ we have that 
\begin{equation}\label{eq1}
\Delta_M V=\Delta_SV+H_S\dfrac{\partial V}{\partial \nu} +\Hess_MV(\nu,\nu).
\end{equation}

By using the static equations \eqref{staticequations3} and \eqref{staticequations4} we get from \eqref{eq1} that
\begin{equation}\label{eq2}
\Delta_SV+\left(\Ric_g(\nu,\nu)+\dfrac{H_S^2}{2}\right)V=0.
\end{equation}

Therefore, since $V\neq 0$ everywhere on $S$, we obtain 
\begin{equation}\label{eq3}
\dfrac{\Delta_S V}{V}+\dfrac{R_g}{2}+\dfrac{3}{4}H_S^2-K_S=0,   
\end{equation}
where we have used the contracted Gauss equation on $S$ which is given by $2\,\Ric_g(\nu,\nu)=R_g-2K_S+H_S^2-|A_S|^2$. Here, $A_S$ denotes the  second fundamental form of $S$, respectively.

By using integration by parts, we have that
\begin{equation}\label{eq4}
\int_S\dfrac{|\nabla_SV|^2}{V^2}\,d\sigma+\int_S\dfrac{R_g}{2}d\sigma+\dfrac{3}{4}H_S^2\,d\sigma=\int_S K_S\,d\sigma,
\end{equation}

By the Gauss-Bonnet theorem, \eqref{eq4} implies  
\begin{equation}\label{eq5}
\left(\dfrac{R_g}{2}+\dfrac{3}{4}H_S^2\right)|S|\leq 2\pi \chi(S)
\end{equation}

It follows from \eqref{eq4} that equality holds in \eqref{eq5} if and only if $V$ is constant on $S$. In this case, by \eqref{eq2} and \eqref{eq3}, we conclude that 
$$
K_S=\dfrac{R_g}{2}+\dfrac{3}{4}H_S^2\ \  \mbox{and} \ \ \Ric_g(\nu,\nu)=-\dfrac{H_S^2}{2} \ \ \mbox{on $S$}.
$$
\end{proof}
\begin{corollary}\label{cor_areaestimate}
Let $(M^3,g)$ be a compact static manifold with boundary with static potential $V:M\to \mathbb{R}$. Let $S$ be a connected component of $\partial M$ and suppose that $V\neq 0$ everywhere on $S$. 
\begin{itemize}
    \item[(i)] If $R_g=0$, then either $S$ is a totally geodesic two-torus or it is a totally umbilical two-sphere. If $H_S>0$, then $S$ is in fact a totally umbilical two-sphere.
    \item[(ii)] If $R_g>0$, then $S$ is a totally umbilical two-sphere.
\end{itemize}
\end{corollary}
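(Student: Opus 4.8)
The plan is to read off both items directly from the area estimate in Proposition \ref{prop_areaestimate}, combined with the structural facts about $\partial M$ established in Proposition \ref{properties}. First I would recall that, by Proposition \ref{properties}(d), the component $S$ is totally umbilical and its mean curvature $H_S$ is a constant. Hence proving the corollary reduces to determining the topology of $S$ and deciding when $H_S$ must vanish; the word \emph{umbilical} in the conclusion is then automatic, and \emph{geodesic} will follow precisely when $H_S=0$.

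For item (i), assume $R_g=0$, so that the estimate of Proposition \ref{prop_areaestimate} becomes $\tfrac{3}{4}H_S^2|S|\leq 2\pi\chi(S)$. Since the left-hand side is nonnegative and $|S|>0$, this forces $\chi(S)\geq 0$. As $S$ is a closed orientable surface (it is a component of the boundary of the orientable manifold $M$), we must have $\chi(S)\in\{0,2\}$, i.e. $S$ is a two-torus or a two-sphere. If $\chi(S)=0$, the inequality gives $\tfrac{3}{4}H_S^2|S|\leq 0$, whence $H_S=0$; since $S$ is totally umbilical, this means $A_S=\tfrac{H_S}{2}g=0$, so $S$ is a totally geodesic two-torus. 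If instead $\chi(S)=2$, then $S$ is a totally umbilical two-sphere. Finally, under the extra hypothesis $H_S>0$ the left-hand side is strictly positive, so $\chi(S)>0$, forcing $\chi(S)=2$; thus $S$ is necessarily a totally umbilical two-sphere.

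For item (ii), assume $R_g>0$. Then the coefficient satisfies $\tfrac{R_g}{2}+\tfrac{3}{4}H_S^2\geq\tfrac{R_g}{2}>0$, so the left-hand side of the estimate is strictly positive. This yields $2\pi\chi(S)>0$, hence $\chi(S)>0$ and therefore $\chi(S)=2$; combined with total umbilicity, $S$ is a totally umbilical two-sphere.

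There is essentially no hard analytic step here, since all the real work is already encapsulated in the Gauss--Bonnet computation of Proposition \ref{prop_areaestimate}; the argument is just a sign analysis. The only point requiring a little care is the topological classification, where one must invoke orientability of $S$ to exclude closed surfaces of nonorientable type (the projective plane and the Klein bottle) that also have nonnegative Euler characteristic. This orientability is inherited from the orientability of $M$, which is the standing assumption throughout.
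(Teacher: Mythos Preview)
Your argument is correct and is exactly the intended one: the paper states the corollary without proof, treating it as an immediate consequence of the inequality in Proposition~\ref{prop_areaestimate} together with the umbilicity and constancy of $H_g$ from Proposition~\ref{properties}(d), and your write-up simply spells out that sign analysis. Your remark about needing orientability of $S$ to rule out the projective plane and the Klein bottle is the only subtle point, and you handle it appropriately.
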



\begin{remark}
In the case $R_g=0$ and $H_S>0$, we note that equality in \eqref{areaineq} is attained on the space-like  slice  of the photon sphere in the static spherically symmetric Schwarzchild black hole spacetime with mass $m=2/(3\sqrt{3}H_S)$ (see Example \ref{exampleSchw}).
\end{remark}

\section{Proof of Theorem \ref{shentypetheorem}}


In order to prove Theorem \ref{shentypetheorem} we will need the following Pohozaev-type integral identity due to Schoen \cite{S}.

\begin{theorem}[Schoen \cite{S}]
Let $(M^n,g)$ be a compact Riemannian manifold with boundary. If $X$ is a smooth vector field on $M$, then
\begin{equation}\label{pohoschoentypeid}
\dfrac{n-2}{2n}\int_M X(R_g)\,dv = -\dfrac{1}{2}\int_M\langle\mathcal{L}_Xg,\Ricst_g\rangle\,dv +\int_{\partial M} \accentset{\circ}{\Ric}(X,\nu)\,d\sigma,   
\end{equation}
where $R_g$ and $\Ricst$ denote the scalar curvature and the trace free part of the Ricci tensor of $M$, respectively. In addition,  $dv$ and $d\sigma$ stands for the volume elements of $M$ and $\partial M$, respectively.
\end{theorem}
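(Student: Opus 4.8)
The plan is to derive \eqref{pohoschoentypeid} as a direct consequence of the contracted second Bianchi identity followed by a single integration by parts. Throughout I write $\Ricst_g=\Ric_g-\frac{R_g}{n}g$ for the trace-free Ricci tensor and use the convention $(\mathcal{L}_Xg)_{ij}=\nabla_iX_j+\nabla_jX_i$, so that, since $\Ricst_g$ is symmetric,
\[
\langle\mathcal{L}_Xg,\Ricst_g\rangle = (\nabla_iX_j+\nabla_jX_i)\,\Ricst_g^{\,ij}=2\,(\nabla_iX_j)\,\Ricst_g^{\,ij}.
\]

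First I would compute the divergence of $\Ricst_g$. The contracted second Bianchi identity gives $\nabla^i\Ric_{g\,ij}=\tfrac{1}{2}\nabla_jR_g$, and since $\nabla^i\big(\tfrac{R_g}{n}g_{ij}\big)=\tfrac{1}{n}\nabla_jR_g$, subtracting yields
\[
\nabla^i\Ricst_{g\,ij}=\Big(\tfrac{1}{2}-\tfrac{1}{n}\Big)\nabla_jR_g=\tfrac{n-2}{2n}\,\nabla_jR_g.
\]
This is precisely the source of the coefficient $\tfrac{n-2}{2n}$ appearing on the left-hand side of \eqref{pohoschoentypeid}.

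Next I would integrate the pointwise Leibniz identity $\nabla_i\big(X_j\,\Ricst_g^{\,ij}\big)=(\nabla_iX_j)\,\Ricst_g^{\,ij}+X_j\,\nabla_i\Ricst_g^{\,ij}$ over $M$ and apply the divergence theorem. The left-hand side produces the boundary integral $\int_{\partial M}\Ricst_g(X,\nu)\,d\sigma$, where $\nu$ is the outward unit normal, while the last term on the right becomes $\tfrac{n-2}{2n}\int_M X(R_g)\,dv$ by the divergence computation above. Collecting terms gives
\[
\int_M(\nabla_iX_j)\,\Ricst_g^{\,ij}\,dv=-\tfrac{n-2}{2n}\int_M X(R_g)\,dv+\int_{\partial M}\Ricst_g(X,\nu)\,d\sigma,
\]
and since $(\nabla_iX_j)\,\Ricst_g^{\,ij}=\tfrac{1}{2}\langle\mathcal{L}_Xg,\Ricst_g\rangle$ by the first display, the left-hand side equals $\tfrac{1}{2}\int_M\langle\mathcal{L}_Xg,\Ricst_g\rangle\,dv$; rearranging is exactly \eqref{pohoschoentypeid}.

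There is no serious analytic obstacle here: the whole content is the Bianchi identity together with the divergence theorem on a manifold with boundary, valid for any smooth vector field $X$ with no regularity needed beyond smoothness of $g$. The only points demanding care are bookkeeping, namely tracking the coefficient $\tfrac{1}{2}-\tfrac{1}{n}=\tfrac{n-2}{2n}$ correctly, invoking the symmetry of $\Ricst_g$ to replace $\mathcal{L}_Xg$ by twice the symmetrized covariant derivative of $X$, and ensuring that the outward normal $\nu$ enters the boundary term with the sign dictated by the divergence theorem.
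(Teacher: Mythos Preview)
Your argument is correct: the identity is exactly the contracted second Bianchi identity $\nabla^i\Ricst_{g\,ij}=\tfrac{n-2}{2n}\nabla_jR_g$ combined with the divergence theorem applied to the vector field $V^i=X_j\Ricst_g^{\,ij}$, and your bookkeeping of the coefficient and the symmetrization is accurate.

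Note, however, that the paper does not actually supply its own proof of this theorem; it merely quotes the identity from Schoen \cite{S} (and mentions the generalization by Gover--{\O}rsted \cite{GO}) before using it as a tool in the proof of Theorem~\ref{shentypetheorem}. So there is no ``paper's proof'' to compare against --- your derivation is the standard one and is precisely what Schoen's original argument amounts to.
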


We note that a general version of \eqref{pohoschoentypeid} was deduced by Gover and Orsted in \cite{GO}. 

\begin{proof}[Proof of Theorem \ref{shentypetheorem}]


Let $V:M \to \mathbb{R}$ be the static potential of $M$ and  $\Sigma=V^{-1}(0)$ its zero set. First, suppose that $\Sigma\cap \partial M=\emptyset$. Thus, by Proposition \ref{properties}, item (a), $\Sigma$ is a closed totally geodesic surface in $\inte M$. 

Let $\Omega$ a connected component of $M\setminus \Sigma$ and let $S$ denote the set $\partial \Omega\setminus \Sigma$. We may assume that $V>0$ on $\Omega$. Denote by $\nu$ and $\xi$ the outward unit normal vector field to $S$ and $\Sigma$, respectively.

Since $R_g=0$, we have that $\Ric_g$ is trace free. Moreover, note that $\mathcal{L}_Xg=2\Hess_g V$. Thus, since $\Hess_gV=V\Ric_g$, by applying \eqref{pohoschoentypeid} on $\Omega$ we get
\begin{eqnarray*}
\int_\Omega V|\Ric_g|^2\,dv&=&\int_\Sigma \Ric_g(\xi,\nabla_MV)\,d\sigma + \int_S\Ric_g(\nu,\nabla_MV)\,d\sigma \\
& = & -  \int_\Sigma|\nabla_M V|\Ric_g(\xi,\xi)d\sigma + \int_S V\Ric_g(\nu,\nu)\,d\sigma,
\end{eqnarray*}    
where we have used that $\xi=-\nabla_M V/|\nabla_MV|$ on $\Sigma$ and Proposition \ref{properties}, item (e), together with the fact that $\nabla_MV=\nabla_SV+V\nu$ on $S$.

By the contracted Gauss equation on $\Sigma$, we have $\Ric_g(\xi,\xi)=-K_\Sigma$. Moreover, by \eqref{staticequations2} we have $V\Ric_g(\nu,\nu)=\Hess_gV(\nu,\nu)$ on $S$. Thus, since $\Delta_MV=0$ on $\Omega$ we get that
$$
V\Ric_g(\nu,\nu)=-\Delta_SV-2\dfrac{\partial V}{\partial\nu} \ \ \mbox{on $S$.}
$$

Therefore,
\begin{eqnarray*}
0 & \leq & \int_\Omega V|\Ric_g|^2\,dv\\
&=& \kappa \int_\Sigma K_\Sigma\,d\sigma - \int_S \Delta_SV\,d\sigma - 2\int_S \dfrac{\partial V}{\partial \nu}\,d\sigma\\
&= & \kappa\int_\Sigma K_\Sigma\,d\sigma -2\int_\Omega \Delta_MV\,dv + 2\int_\Sigma\dfrac{\partial V}{\partial\xi}\,d\sigma\\
& = & 2\kappa\left(\pi\chi(\Sigma) - |\Sigma|\right),
\end{eqnarray*}
where we have used that $\partial V/\partial \xi=-\kappa$ on $\Sigma$.

Since $\kappa>0$, we conclude that $|\Sigma|\leq \pi\chi(\Sigma)$. In particular, $\Sigma$ is a two-sphere and $|\Sigma|\leq 2\pi$. In fact, we have $|\Sigma|<2\pi$ since otherwise $\Omega$ would be Ricci flat. More precisely, if $|\Sigma|=2\pi$, then the above computation shows that 
$$
\int_\Omega V|\Ric_g|^2\,dv=0
$$
for any connected component $\Omega$ of $M\setminus \Sigma$. Therefore, $\Ric_g=0$ on $M$ which implies that $(M,g)$ is flat. Since $\Sigma$ is totally geodesic in $M$ we obtain that $\Sigma$ is flat. By the Gauss-Bonnet theorem, this is a contradiction as $\Sigma$ is a two-sphere.

Now, suppose that $\Sigma\cap \partial M\neq \emptyset$. In this case, by Proposition \ref{properties}, item (a), $\Sigma$ is a free boundary totally geodesic surface in $M$.

Let $\Omega$ be a connected component of $M\setminus \Sigma$ and let $S$ denote the set $\partial\Omega\setminus \Sigma$. Again, we may assume that $V>0$ on $\Omega$. Note that on $\Gamma=S\cap \Sigma$, we have $S\perp \Sigma$. Denote by $\nu$ and $\xi$ the outward unit normal vector field to $S$ and $\Sigma$, respectively.

Similar to the the previous case, we get that
\begin{eqnarray*}
0&\leq& \int_\Omega V|\Ric_g|^2\,dv\\
&=& \kappa \int_\Sigma K_\Sigma\,d\sigma - \int_S \Delta_SV\,d\sigma - 2\int_S \dfrac{\partial V}{\partial \nu}\,d\sigma\\
&= & \kappa\int_\Sigma K_\Sigma\,d\sigma-\int_\Gamma \dfrac{\partial V}{\partial\xi} -2\int_\Omega \Delta_MV\,dv + 2\int_\Sigma\dfrac{\partial V}{\partial\xi}\,d\sigma\\
& = & 2\pi\kappa\chi(\Sigma)-2\kappa |\Sigma|,
\end{eqnarray*}
where we have used the Gauss-Bonnet theorem for surfaces with boundary and that the geodesic curvature of $\Gamma$ in $\Sigma$ is equal to $1$ and  $\partial V/\partial\xi=-\kappa$ on $\Sigma$. Therefore, we have that $|\Sigma|\leq \pi$.

Moreover, if we suppose that $|\Sigma|=\pi$ then we conclude, as in the previous case,  that $(M^3,g)$ is Ricci flat which implies $(M^3,g)$ is flat. In particular, $\partial M$ is connected. Since $\Pi_g=g$ we obtain by the Gauss equation that $\partial M$ has constant Gauss curvature equal to $1$. Thus, $\partial M$ with the induced metric is isometric to the round sphere $(\mathbb S^2,g_{can})$ of radius $1$ and this implies that $(M^3,g)$ is isometric to the Euclidean unit ball $(B^3,\delta)$. In particular, by \cite{HH}, Proposition 4.2, we conclude that $V$ is given by $V(x)=\langle x,v\rangle$, $x\in \Omega=B^3$, for some vector $v\in\mathbb{R}^{n}\setminus\{0\}$.
\end{proof}

\section{Proof of Theorem \ref{photonsphererigidity}}

In this section we will prove Theorem \ref{photonsphererigidity}. Let $(M^3,g)$ be an orientable compact static manifold with boundary with static potential $V:M\to \mathbb{R}$. Suppose that $R_g=0$ on $M$, $H_g>0$ on $\partial M$, $\Sigma=V^{-1}(0)$ is connected and $\Sigma=V^{-1}(0)\subset \inte M$. 

Let $\Omega$ be a connected component of $M\setminus \Sigma$. We may assume that $V>0$ on $\Omega$. We have $\partial \Omega=\Sigma\cup \left (\cup_{i=0}^kS_i\right)$, where each $S_i$ is a connected surface with positive mean curvature. In fact, by Corollary \ref{cor_areaestimate}, item (i), each $S_i$ is a totally umbilical two-sphere.

First, we will prove that $S=\cup_{i=0}^k S_i$ is connected, that is, $k=0$. Following Section 6 of \cite{A}, let $U$ be a small tubular neighborhood of $\Sigma$ diffeomorphic to $[0,1)\times \Sigma$. We identify any point $p\in U$ with its corresponding point $(r,x)\in [0,1)\times\Sigma$, that is, we write $p=(r,x)$. Let $N^4$ be the quotient space given by 
\begin{equation}
N^4=\mathbb S^1\times (\Omega\setminus\Sigma)\,\sqcup\,(B^2_1\times \Sigma)/\sim,
\end{equation}
where $\mathbb S^1$ and $B^2_1$  are the unit circle and the open unit ball centered at the origin of $\mathbb{R}^2$, respectively, and $\sim$ is the equivalence relation that identifies $(\theta,p)\in \mathbb S^1\times (U\setminus\Sigma)$ with $(r\cos\theta, r\sen\theta, x)\in B^2_1\times \Sigma$ if $p=(r,x)$. 

The smooth structure on $\mathbb S^1\times (\Omega\setminus\Sigma) \sqcup (B^2_1\times \Sigma)$ induces a canonical smooth structure on $N^4$ and with respect to this smooth structure $\{0\}\times \Sigma$ is a smooth embedded submanifold of codimension two that we can identify with $\Sigma$. In our case, differently from \cite{A}, $N^4$ has nonempty boundary. In fact, $\partial N=\mathbb S^1\times S$.

The dense open subset $N^4\setminus \Sigma$ can be identified with $\mathbb S^1\times (\Omega\setminus\Sigma)$. As $V>0$ on $\Omega\setminus \Sigma$, we can define the smooth Riemannian metric $h=V^2d\theta^2+g$ on $N^4\setminus \Sigma$.

The Riemannian metric $h$ is singular on $\Sigma$ and has, up to pertubation, a conical behaviour with angle $k=|\nabla_{\Sigma}V|>0$ in the directions transverse to $\Sigma$. Moreover, since $(\Omega,g)$ is a static manifold with potential $V$ we have that $h$ is Einstein. The manifold $N^4$ endowed with the singular metric $h$ is called the singular Einstein metric associated to the static manifold $(\Omega,g)$ with static potential $V:\Omega\to \mathbb{R}$ (see Section 6 of \cite{A} for details). In our case, $h$ is Ricci flat since $(\Omega,g)$ is scalar flat.  Since $\Sigma$ is compact and connected,  we  normalize the static potential $V$ in such way that $|\nabla V| =1$ on $\Sigma$. In this case, the metric $h$ extends smoothly to $\Sigma$ (see for instance  \cite{Se} and \cite{A}, Section 6).

Now, let $\partial_iN=\mathbb S^1\times S_i$ be a connected component of $\partial N$. A simple computation shows that $\partial_iN$ is totally umbilical with principal curvatures equal to 
$$
\frac{1}{V}\frac{\partial V}{\partial \nu}=\dfrac{1}{2}H_{S_i},
$$
where $\nu$ denotes the outward unit normal vector to $S_i$ and $H_{S_i}=H_g|_{S_i}$. Since $H_g>0$ on $S$, we have that $\partial_iN$ has positive mean curvature for all $i=0,1,\ldots,k$.

Since $h$ is Ricci flat and smooth on $N$ and every connected component of $\partial N$ has positive mean curvature it follows from \cite{FL}, Proposition 2.8, that $\partial N$ is connected. 

In order to prove item (ii), first note that Proposition \ref{psareaestimate} implies that
\begin{equation}\label{areaestimate3}
H_S^2|S|\leq \dfrac{16\pi}{3}.
\end{equation}

Suppose that equality holds in \eqref{areaestimate3}. Again, by Proposition \ref{psareaestimate} we have that $V$ is constant on $S$ and that $K_S=(3/4)H_S^2.$

Define $m=2/(3\sqrt{3}H_S)>0$. Let $(M_{3m}=[3m,+\infty)\times \mathbb{S}^2,g_m)$, where 
$$
g_m=\left(1-\dfrac{2m}{r}\right)^{-1}dr^2+r^2\overline{g}
$$
and $\overline{g}$ is the standard metric of the two-sphere $\mathbb{S}^2$ of radius 1 (see Example \ref{exampleSchw}). $(M_{3m},g_m)$ is the complement of the region on the three-dimensional Schwarschild space bounded by the photon sphere $S_{3m}=\{3m\}\times \mathbb{S}^2=\partial M_{3m}$. Note that $V_m(r)=(1-2m/r)^{1/2}$ defines a static potential on $(M_{3m},g_m)$. 

By our choice of $m$, we have that $K_{S}=K_{S_{3m}}$. Therefore, we conclude that $(S,g|_{S})$ and $(S_{3m},g_{m}|_{S_{3m}})$ are isometric.  Thus, by gluing $(\Omega,g)$ and $(M_{3m},g_m)$ along $S$ and $S_{3m}$ we can find a Riemannian manifold $(\widetilde{M}^3_+,\widetilde{g})$ where the metric is given by $\widetilde{g}=g$ on $\Omega$ and $\widetilde{g}=g_m$ on $M_{3m}$. Note that $\widetilde{g}$ is smooth away from the gluing surface.

After a rescaling of $V$, if necessary, we can assume that $V|_{S}=V_m|_{S_{3m}}$. As done in \cite{CG1}, Section 3, Step 1, we can conclude that the metric $\widetilde{g}$ is $C^{1,1}$ across the gluing surface.

Next, define $\widetilde{V}:\widetilde{M}_+^3\to\mathbb{R}$ by 
$$
\widetilde{V}(p)=\left\{
\begin{array}{rll}
V(p), & \mbox{if } & p\in \Omega    \\
V_m(p),     & \mbox{if} & p\in M_{3m}. 
\end{array}
\right.
$$
Since $V$ and $V_{3m}$ satisfy the boundary conditions \eqref{staticequations4} on $S$ and $S_{3m}$, respectively, and $V|_{S}$ and $V_m|_{S_{3m}}$ we have that $\widetilde{V}$ is smooth away from the gluing surface and $C^{1,1}$ across it.  Now, reflect $(\widetilde{M}^3_+,\widetilde{g})$ through $\Sigma$ to obtain an isometric copy $(\widetilde{M}^3_-,\widetilde{g})$ and glue them to each other along $\Sigma$ to obtain a new manifold $(\widetilde{M}^3,\widetilde{g})$ which has two isometric ends and is smooth away $S$ (and its reflected copy) and $\Sigma$ and $C^{1,1}$ across them.

Define $\widetilde{V}_+:=\widetilde{V}$ and $\widetilde{V}_-:=-\widetilde{V}.$ By a  abuse of notation we again denote $\widetilde{V}=\widetilde{V}_\pm$ on $\widetilde{M}^3_\pm$.
 
As in \cite{BuMa},  we wish to use  $u:=(\widetilde{V}+1)/2$ on $\widetilde{M}$ as conformal factor. 
Before we have to show that $u>0$ on $\widetilde{M}$. It suffices to show that  $0\leq \widetilde V<1$ on $\widetilde M^3_+.$ In order to prove this, note that $V$ is harmonic on $\Omega$. Thus, $V$ attains its maximum on $\partial\Omega$. Since $V=0$ on $\Sigma$ and $V>0$ on $\Omega\setminus\Sigma$ we have that $V$ attains its maximum at $S$. Therefore, $V\leq \max_{S}V=\max_{S_{3m}} V_m<1$. Finally, since $V_{3m}<1$ on $M_{3m}$ we conclude that $0\leq \widetilde{V}<1$ on $\widetilde{M}^3_+.$

We define  the Riemannian metric 
$
\widehat{g}=u^4\widetilde g
$
in such way that   $(\widetilde{M}^3_+,\widehat{g})$ has  ADM-mass equal to zero and the doubled end can be one-point compactified, that is,  $(\widetilde{M}^3_-,\widehat{g})$ can be compactified by adding in a point $\infty$ at infinity. By construction, we obtain  a  geodesically complete, scalar flat Riemannian manifold and one asymptotically flat end of zero ADM mass that is smooth away from $\Sigma$ and $S$ and one point.

By the rigidity statement of the  (weak regularity) Positive Mass Theorem  proved by Bartnik \cite{Ba}, the conformally modified manifold $(\widehat{M}^3,\widehat{g})$ must be isometric to the Euclidean space. Therefore we have that $(\widetilde M^3,\widetilde g)$  must be conformally flat and, as in \cite{BuMa}, we conclude that  $(\widetilde M^3,\widetilde g)$  is the Schwarzschild solution. Hence $(\Omega,g)$  is isometric to $([2m,3m]\times \mathbb S^2,g_m)$.

\end{document}